\newcommand{\rar}{\rightarrow}
\newcommand{\lar}{\longrightarrow}
\newtheorem{Theorem}{Theorem}[section]
\newtheorem{Corollary}[Theorem]{Corollary}
\newtheorem{Proposition}[Theorem]{Proposition}
\newtheorem{proposition}[Theorem]{Proposition}
\newtheorem{Conjecture}[Theorem]{Conjecture}
\newtheorem{Remark}[Theorem]{Remark}
\newtheorem{remark}[Theorem]{Remark}
\newtheorem{example}[Theorem]{Example}
\newtheorem{Definition}[Theorem]{Definition}
\theoremstyle{plain}
\newtheorem*{thm*}{Theorem}
\newtheorem*{cor*}{Corollary}
\newtheorem*{claim*}{Claim}
\theoremstyle{definition}
\theoremstyle{remark}
\numberwithin{equation}{Theorem}
\def\ann{\mbox{\rm ann}}
\def\hdeg{\mbox{\rm hdeg}}
\def\Hom{\mbox{\rm Hom}}
\def\End{\mbox{\rm End}}
\def\coker{\mbox{\rm coker }}
\def\Ext{\mbox{\rm Ext}}
\def\Tor{\mbox{\rm Tor}}
\def\depth{\mbox{\rm depth }}
\def\height{\mbox{\rm height }}
\def\bar#1{{\overline{#1}}}
\def\xx{{\bf x}}
\def\hh{{\bf h}}
\def\ff{{\bf f}}
\def\RR{{\bf R}}
\def\CC{{\Lambda}}
\def\II{{\bf I}}
\def\JJ{{\bf J}}
\def\SS{{\bf S}}
\def\ZZ{{\bf Z}}
\def\FF{{\bf F}}
\def\m{{\mathfrak m}}
\def\p{{\mathfrak p}}
\def\LL{{\mathbf L}}
\def\KK{{\mathbf K}}
\begin{document}

\title[On the   Radical    of Endomorphism  Rings  of Local Modules
]{
On  the Radical    of Endomorphism Rings  of Local Modules}

\author{Wolmer V. Vasconcelos}
\address{Department of Mathematics, Rutgers University, 110 Frelinghuysen Rd, Piscataway, NJ 08854-8019, U. S. A.}
\email{vasconce@math.rutgers.edu}

\thanks{{AMS 2010 {\em Mathematics Subject Classification:} Primary:
13H10, Secondary: 13C14, 16E65.}\\
{{\em Keywords and phrases}: Cohen-Macaulay, Jacobson radical, local
module, ring of endomorphisms. }\\
 }

\begin{abstract}
\noindent
We study the construction and homological properties of modules whose rings of
endomorphisms have a unique two-sided maximal ideal.
\end{abstract}

\maketitle

\begin{center}
Dedicated to Aron Simis on the occasion of his $70$th birthday.
\end{center}

\section{Introduction}

Let
 $(\RR, \m)$   be a Noetherian local ring and $E$ a finitely
generated $\RR$-module. The ring $\CC=\Hom_{\RR}(E,E)=\End(E,E)$ expresses many
properties of $E$, and sometimes of $\RR$ itself,  many of which are well hidden.
The terminology {\em local module } refers to a finitely generated
module $E$ over a  Noetherian local ring $(\RR, \m)$ such that
the ring of endomorphisms $\End_{\RR}(E)=\Hom_{\RR}(E,E)$ has a unique two-sided
maximal ideal. 
 Our aim in this note is to develop techniques of construction of 
 local modules
 and study their homological properties.  It
is also motivated by their roles in the following related 
 questions:

\begin{itemize}
\item The HomAB question:
Can the number of generators $\nu(\CC)$ of $\CC$ be estimated in terms of
properties of $E$? For a detailed discussion of this question we
refer
 to \cite{Dalili}, \cite{DV}. Recently, Kia Dalili resolved one its
main problems in proving the existence, in the graded case,  of {\it a priori} uniform 
bounds for $\nu(\CC)$ in terms of extended cohomological degrees of
$E$.

\item Non-commutative desingularization:
There are
 noteworthy rings of endomorphisms
 in the recent  literature. One class of the most
intriguing rings are those of finite global dimension; see  \cite{DH10},
\cite{Le11},
\cite{denBergh1}, \cite{denBergh2}.
\index{noncommutative
regular ring} We show this is rare for local modules.

\item Degree representation of a module question: For a module $E$, its degree
representation is the smallest integer $r$--when it exists--such that
there is an embedding of $\RR$-algebras $\varphi: \Hom_{\RR}(E,E)\lar
 M_r(\RR)$. Which modules admit such a degree?

\end{itemize}

 We want to study $\CC$ by 
 examining its  Jacobson radical. For simplicity we often denote  the module of $\RR$-homomorphisms
$\Hom_{\RR}(E,F)$ by $\Hom(E,F)$ and employ the similar notation for
some derived functors. We will argue that
local modules often work as
  blocks  in building other modules
whose modules of endomorphisms allow for the determination of their
Jacobson radicals.

In sections 2 and 3 we develop techniques to  tell when modules
of syzygies are local. The most interesting classes of such modules
avail themselves of the notion of the Auslander dual (\cite{AusBr}).
 The main results of this note are given in
the last section, particularly in Theorem~\ref{homollocal}, showing
that if $\RR$ is normal, contains the rationals and $E$ is torsionfree,
then if $E$ is a local module that is $\CC$-projective then $E$ is
$\RR$-free.

\section{Jacobson radical}

In this section and next,
we  treat conditions for the algebra $\CC=\Hom_{\RR}(E,E)$
to have a unique two-sided maximal ideal. An example is a free
$\RR$-module.
An extreme instance are modules whose rings of endomorphisms are local, that is have a unique maximal 
ideal.  There is an obvious 
 separation between the two cases. 
Local modules come however in greater variety. For instance, if $K$ is a field and
$\RR= K[\epsilon]$, $\epsilon^2=0$, is the ring of {\em dual numbers}, then any finitely generated $\RR$-module
$E = \RR^m \oplus (\epsilon)\RR^{ n}$. 
Observing that
by Nakayama Lemma, $\m \CC$ is contained in the Jacobson radical
$\JJ$ of $\CC$, all these modules are local.

\medskip

We will denote the $\RR$-dual of an $\RR$-module $E$ by $E^*=\Hom_{\RR}(E,\RR)$.
 These modules admit a natural $\CC$ action,  
 $E$ as a left $\CC$-module and
$E^*$ as a right $\CC$-module.
 We
say that $E$ is reflexive if the usual mapping $E\rar E^{**}$ is an
isomorphism. A key role will be played in our discussion by the module $E^*\otimes_{\RR} E$, which carries two
natural structures, as a left and a right $\CC$-module.

\begin{proposition}\label{jrad1} Let $(\RR, \mathfrak{m})$ be a Noetherian local
ring and let $E$ be a finitely generated $\RR$-module.
\begin{enumerate}
\item[{\rm (i)}]
 If $E$ has no free summand, then the image of $E^*\otimes E$ in
$\CC$ is a two-sided ideal contained in the Jacobson radical $\JJ$ of $\CC$.
\item[{\rm (ii)}]  $\Hom_{\RR}(E, \m E)$ is a two-sided ideal contained in $\JJ$.
\end{enumerate}
\end{proposition}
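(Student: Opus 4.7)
The plan is to establish (ii) first and reduce (i) to it. For (ii), I first note that $\Hom_\RR(E, \m E)$ is a two-sided ideal of $\CC$: if $f(E) \subseteq \m E$ and $g \in \CC$, then $(g \circ f)(E) = g(\m E) = \m\, g(E) \subseteq \m E$ and $(f \circ g)(E) \subseteq f(E) \subseteq \m E$. To show the containment $\Hom_\RR(E, \m E) \subseteq \JJ$, I will invoke the standard noncommutative criterion that a two-sided ideal lies in the Jacobson radical iff $1 - f$ is a unit for every $f$ in the ideal. Given $f \in \Hom_\RR(E, \m E)$, surjectivity of $1 - f$ follows from Nakayama applied to $E/(1-f)(E)$: since $(1-f)(x) \equiv x \pmod{\m E}$, one has $(1-f)(E) + \m E = E$. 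The classical theorem (due to Vasconcelos himself) that a surjective endomorphism of a finitely generated module is automatically injective then yields that $1 - f$ is an automorphism of $E$, and hence a two-sided unit of $\CC$.

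For (i), let $T$ denote the image in $\CC$ of the evaluation map $E^* \otimes_\RR E \to \CC$, $\varphi \otimes e \mapsto (x \mapsto \varphi(x)\, e)$. The natural $\CC$-bimodule structure on $E^* \otimes_\RR E$, namely $g \cdot (\varphi \otimes e) = \varphi \otimes g(e)$ and $(\varphi \otimes e) \cdot g = (\varphi \circ g) \otimes e$, transports through this map, so $T$ is a two-sided ideal of $\CC$. The decisive observation is that if $E$ has no free summand, then $\varphi(E) \subseteq \m$ for every $\varphi \in E^*$: otherwise $\varphi(e)$ would be a unit for some $e \in E$, giving a split surjection $\varphi \colon E \twoheadrightarrow \RR$ with section $1 \mapsto \varphi(e)^{-1} e$, contradicting the hypothesis. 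Consequently each generator $x \mapsto \varphi(x)\, e$ of $T$ lands in $\m E$, so $T \subseteq \Hom_\RR(E, \m E) \subseteq \JJ$ by (ii).

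The only step beyond direct verification is the injectivity of $1 - f$ in (ii), which rests on the classical fact that surjective endomorphisms of finitely generated modules are injective; everything else is routine bimodule bookkeeping combined with Nakayama's Lemma.
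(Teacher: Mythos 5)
Your proof is correct and rests on the same three ingredients the paper uses: the observation that absence of a free summand forces $\varphi(E)\subseteq\m$ for every $\varphi\in E^*$, Nakayama's lemma to obtain surjectivity of $1-f$, and the standard fact that a surjective endomorphism of a finitely generated module is an automorphism. The only difference is organizational: you prove (ii) first and deduce (i) via the containment $\mathrm{image}(E^*\otimes E)\subseteq\Hom_{\RR}(E,\m E)$, whereas the paper proves (i) directly and remarks that (ii) is ``similar'' --- but this is precisely the chain $\JJ_1\subseteq\JJ_0\subseteq\JJ$ that the paper records immediately after the proposition, so your route is the same argument read in the cleaner order.
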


\begin{proof} There are two actions of $\CC$ on $E^*\otimes E$. For
$\hh\in \CC$, $(f\otimes e)\hh= f\circ \hh \otimes e$ and
$\hh(f\otimes e)= f\otimes \hh(e)$.

 Let $\II$ be the identity of $\CC$. To prove that
\[ \hh=\II + \sum_{i=1}^n f_i\otimes e_i\]
is invertible, note that for each $e\in E$,
\[ \hh(e)=e + \sum_{i=1}^n f_i(e) e_i\in e + \mathfrak{m}E,\]
since $f_i(e) \in \mathfrak{m}$ as $E$ has no free summand.
From the Nakayama Lemma, it follows that $\hh$ is a surjective
endomorphism, and therefore must be invertible.

\medskip

The proof of (ii) is similar.
\end{proof}

Throughout $(\RR,\m)$ is a Noetherian local ring and $E$ is a
finitely generated $\RR$-module.
The ideal $E^*(E)= \tau(E)=(f(e), f\in E^*, e\in E)$ is the {\em trace
ideal}\index{trace ideal} of $E$. It is of interest to describe classes of modules of
syzygies with $\tau(E)\subset \m$.
If the trace ideal
$E^*(E)=\RR$, by Morita's theory (see \cite[Theorem A.2]{AG}), $E$ is a
projective module over $\CC=\End(E)$. This is easy to verify directly
as follows. Suppose $E=\RR\epsilon\oplus M$. Then $\CC=\End(E)$ may
be represented as
\[
 \CC=
\left[\begin{array}{cc}
R & M^* \\
M & \End(M)
\end{array}\right].
\] As a left $\CC$-module, $E=\CC\epsilon$ where the annihilator of
$\epsilon$ is the left ideal
\[
 L=
\left[\begin{array}{cc}
0 & M^* \\
0 & \End(M)
\end{array}\right],
\]
which splits off $\CC$.

\begin{remark}{\rm
There are cases however when
$E$ is $\CC$-projective but $E^*(E)\neq \RR$. For instance, for any local Noetherian
domain of Krull dimension $1$ and multiplicity $\leq 2$,
 then any ideal has this property (\cite{Bass}).
}
\end{remark}

\begin{remark}
From the sequence
\[ 0 \rar \m E \lar E \lar E/\m E=\bar{E}= k^n \rar 0,
\]
we have the exact sequence
\[
0 \rar \Hom_{\RR}(E, \m E) \lar \CC \lar M_n(k) \lar \Ext_{\RR}^1(E, \m E).
\]
This shows that $\dim \CC/\JJ\leq n^2$, since $\Hom_{\RR}(E,\m
E)\subset \JJ$, giving an explicit  control over the number of
maximal two-sided ideals of $\CC$.
\end{remark}

If $E$ is a module without free summand, we use the notation of the
subideals of $\JJ$
\[\JJ_1=\mbox{\rm image  $E^*\otimes E $}\subset
 \JJ_0=\Hom(E, \m E) \subset \JJ.\]

We introduce some terminology on special homomorphisms.
Let $(\RR,
\m)$ be Noetherian local ring, and let  $E$ and $F$ be finitely generated
$\RR$-modules. A homomorphism $\varphi: E\rar F$ is said to be {\em
small}\index{small homomorphism} if $\varphi(E)\subset \m F $.

\medskip

In assembling the Jacobson radical of the ring of endomorphisms of
direct sums we will consider conditions such as:
\begin{itemize}
\item All homomorphisms in $\Hom(E,F)$ are small.

\item The endomorphisms of $E$ that factor through $F$ are small,
that is
\[\Hom(F, E)\circ \Hom(E,F)\subset \Hom(E, \m E).\]

\end{itemize}

\subsection*{Decomposition}

Let $(\RR,\m)$ be a local ring and $E$ a finitely generated $\RR$-module.
Suppose $E=E_1\oplus E_2$ is a nontrivial decomposition. We seek
 to relate the Jacobson of $\CC=\Hom(E,E)$ to those of the
subrings $\CC_i=\Hom(E_i,E_i)$, $i=1,2$ and the relationships
between $E_1$ and $E_2$.
A special   case is that
where $E_1= F=\RR^n$ and $E_2=M$  is a
module without free summands.

\begin{Proposition} Let $\RR$ be a local ring and $E$ a  module with
a decomposition $E=E_1\oplus E_2$. Set $\CC_i=\Hom_\RR(E_i, E_i)$ and
$\JJ'_i=\Hom_\RR(E_i, \m E_i)$. Assume the transition conditions
\[
\Hom_\RR(E_j, E_i)\cdot \Hom_\RR(E_i,E_j)\subset
\Hom_\RR(E_j, \m E_j), \quad i\neq j.\] Then the Jacobson
radical of $\CC=\End(E)$ is
\[ \JJ=\left[ \begin{array}{cc}
\JJ_1 &\Hom(E_2,E_1) \\
\Hom(E_1, E_2) & \JJ_2
\end{array}
\right],\]
where  $\JJ_i$ is the Jacobson radical of
$\End(E_i)$.
\end{Proposition}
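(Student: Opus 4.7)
The plan is to prove the two inclusions $\mathrm{Jac}(\CC)\subseteq \JJ$ and $\JJ\subseteq \mathrm{Jac}(\CC)$ separately, working throughout in the $2\times 2$ block description
\[
\CC\;=\;\begin{pmatrix}\End(E_1) & \Hom(E_2,E_1) \\ \Hom(E_1,E_2) & \End(E_2)\end{pmatrix},
\]
with composition given by matrix multiplication.

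\emph{Step 1.} First I would check that $\JJ$ is a two-sided ideal of $\CC$. The only nontrivial entries to verify are the diagonal ones: for example, the $(1,1)$-entry of a product $\alpha z$ with $\alpha\in\CC$ and $z\in\JJ$ is $\alpha_{11}z_{11}+\alpha_{12}z_{21}$, where the first summand lies in $\JJ_1$ because $\JJ_1$ is two-sided in $\End(E_1)$, and the second lies in $\Hom(E_2,E_1)\circ \Hom(E_1,E_2)\subseteq \Hom(E_1,\m E_1)\subseteq \JJ_1$ by the transition condition together with Proposition~\ref{jrad1}(ii). Once $\JJ$ is known to be an ideal, passage to the quotient kills both off-diagonal blocks, and
\[
\CC/\JJ\;\cong\; \End(E_1)/\JJ_1\;\times\; \End(E_2)/\JJ_2,
\]
a product of semiprimitive rings, hence semiprimitive. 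Therefore $\mathrm{Jac}(\CC)\subseteq\JJ$.

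\emph{Step 2.} Because $\JJ$ is two-sided, the reverse inclusion reduces to showing that $I+z$ is a unit of $\CC$ for every $z\in\JJ$. The crucial preliminary observation is that $D:=1+z_{22}$ is already a unit in $\End(E_2)$, since $z_{22}\in\JJ_2=\mathrm{Jac}(\End(E_2))$. This makes available the block-LU factorization
\[
I+z\;=\;\begin{pmatrix} 1 & z_{12}D^{-1} \\ 0 & 1\end{pmatrix}\begin{pmatrix} S & 0 \\ 0 & D\end{pmatrix}\begin{pmatrix} 1 & 0 \\ D^{-1}z_{21} & 1\end{pmatrix},\qquad S:=(1+z_{11})-z_{12}D^{-1}z_{21}.
\]
The two triangular factors are manifestly units in $\CC$, so it remains to show $S$ is a unit in $\End(E_1)$. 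Here the transition condition intervenes decisively: the composite $z_{12}D^{-1}z_{21}$ is an endomorphism of $E_1$ factoring through $E_2$, so it lies in $\Hom(E_1,\m E_1)\subseteq \JJ_1$. Hence $z_{11}-z_{12}D^{-1}z_{21}\in \JJ_1$ and $S$ is a unit; therefore $I+z\in \CC^{\times}$ and $\JJ\subseteq \mathrm{Jac}(\CC)$.

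\emph{Main obstacle.} Conceptually nothing here is very deep, because the transition conditions were tailored precisely so that off-diagonal compositions close up inside the Jacobson radicals of the corner rings. The most delicate part is the Schur-complement bookkeeping in Step 2: one must first secure invertibility of $D$ \emph{inside} $\End(E_2)$ (not merely in $\CC$) before forming $D^{-1}z_{21}$, and one relies implicitly on Proposition~\ref{jrad1}(ii) at both $E_1$ and $E_2$ to upgrade the transition-condition containment in $\Hom(E_i,\m E_i)$ to containment in $\JJ_i$. Without this last bridge, the proposed $\JJ$ would have to be enlarged, and the clean matrix description in the statement would fail.
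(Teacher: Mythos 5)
Your proof is correct and takes essentially the same route as the paper: the transition conditions make $\JJ$ a two-sided ideal (using Proposition~\ref{jrad1}(ii) to lift $\Hom(E_i,\m E_i)$ into $\JJ_i$), and invertibility of $\II+z$ is established by a Schur-complement reduction. The paper writes this reduction out by solving the $2\times 2$ linear system directly, pivoting on the $(1,1)$ block $(\II_1+a)^{-1}$, while you package the same calculation as a block-LU factorization pivoting on the $(2,2)$ block; these are interchangeable. One small merit of your write-up is that you explicitly close the argument by noting $\CC/\JJ\cong\End(E_1)/\JJ_1\times\End(E_2)/\JJ_2$ is semiprimitive, which gives $\mathrm{Jac}(\CC)\subseteq\JJ$; the paper's phrase that it is ``enough'' to show $\II+\Phi$ is invertible for $\Phi\in\LL$ establishes only the inclusion $\LL\subseteq\mathrm{Jac}(\CC)$ and leaves the reverse containment implicit.
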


\begin{proof} Note that $\JJ_i'$ is a subideal of $\JJ_i$, and for
$A=\Hom(E_1, E_2)$ and $B=\Hom(E_2, E_1)$ it holds that $A\cdot
B\subset \JJ_2'$ and $B\cdot A\subset \JJ_1'$. From these it follows
that
\[ \LL=\left[ \begin{array}{cc}
 \JJ_1 &  B \\
A &  \JJ_2
\end{array}
\right]\] is a two-sided ideal of $\CC$.

Denote by $\II$ and $\II_i$  the identities  of $\CC$ and $\CC_i$.
To show $\LL$ is the Jacobson radical it is enough to show that for $\Phi\in
\LL$, $\II+\Phi$ is invertible, or equivalently it is a surjective
endomorphism of $E$. In other words, for $a\in \JJ_1$, $b\in A$,
$c\in B$ and $d\in \JJ_2$ the system of equations
\begin{eqnarray*}
x + ax + by &=& u\\
c x + y + dy &=& v
\end{eqnarray*}
for $u\in E_1$, $v\in E_2$ has always a solution $x\in E_1$ and $y\in
E_2$.
It is enough to observe that in the formal solution
\begin{eqnarray*}
x & = & (\II_1 + a)^{-1}(u-by)\\
y &=& (\II_2 + d -c(\II_1 + a)^{-1}b)^{-1}(v-c(\II_1+a)^{-1}u),
\end{eqnarray*}
$c(\II_1+a)^{-1}b\in \JJ_2$.

\end{proof}

\begin{Corollary}\label{Jacsemi} Let $\RR$ be a local ring and $M$ a  module without
free summands.  If $F$ is a free $\RR$-module of positive rank then the Jacobson
radical of $\CC=\End(F\oplus M)$ is
\[ \JJ=\left[ \begin{array}{cc}
\m \cdot \End(F) & \Hom(M,F) \\
\Hom(F,M) & \JJ_0
\end{array}
\right],\]
where  $\JJ_0$ is the Jacobson radical of
$\End(M)$.
\end{Corollary}

\begin{proof} Note that $\Hom(M,F) = \Hom(M, \m F)$.
\end{proof}

This shows that a non-free  module with a free summand is never local.
It permits, to understand the Jacobson radical of $\End(E)$, to peel
away from $E$ a free summand of maximal rank.

\subsection*{Modules without free summands}

Let $(\RR,\m)$ be a Cohen--Macaulay local ring of dimension $d$ and
$E$ a finitely generated $\RR$--module.

\begin{example}{\rm
We will now describe two classes of modules of syzygies that do not
have free summands.

\medskip

\begin{enumerate}

\item {\cite[Lemma 1.4]{Herzog78}} Let $\RR$ be a Cohen--Macaulay
local ring and $E$ a module of
syzygies
\[ 0 \rar E \lar F \lar M \rar 0,\]
$E\subset \m F$ and $M$ a maximal Cohen--Macaulay module. Then $E$
has no free summand.

\begin{proof} Assume otherwise that $E=\RR\epsilon \oplus E'$, $0\neq \epsilon\in \m F$.
Setting $M'=F/\RR\epsilon$ we have the exact sequence
\[ 0 \rar E' \lar M' \lar M \rar 0 \] showing that $M'$
is a maximal
Cohen--Macaulay module. But $M'$ is a module with a free resolution, so must be free
by Auslander-Buchsbaum equality (\cite[Theorem 1.3.3]{BH}).
 This means that $\RR\epsilon$ is a summand of $F$, which is impossible.

\end{proof}

\item Suppose $\RR$ is a Gorenstein local ring and $E$ is a module of
syzygies
\[ 0 \rar E \stackrel{\varphi}{\lar} F_s \lar \cdots \lar F_1 \lar F_0 \lar M \rar 0,
\] $E\subset \m F_s$. If $1\leq s\leq \height(\ann(M))-2$ then $E$
has no free summand.

\begin{proof}  $E$ is a reflexive being a second syzygy module over
the Gorenstein ring $\RR$. It will be enough to show the dual module
of $\RR\epsilon$ splits off $F_s^*$.

\medskip

By assumption $\Ext_{\RR}^i(M,\RR)=0$ for $i\leq s+1$. Applying
$\Hom(\cdot, \RR)$ to the complex we obtain an exact complex
\[ 0 \rar F_0^* \lar F_1^* \lar \cdots \lar F_s^*
\stackrel{\varphi^*}{\lar E^*} \rar 0,
\]
to prove the assertion since $\varphi=\varphi^{**}$.
\end{proof}

The assumption that $\RR$ is Gorenstein was used to guarantee that
second syzygies modules are reflexive. This could be achieved in many
other ways. For example, with $\RR$ Cohen--Macaulay and $\height
\ann(M)\geq 2$.

\medskip

\item Taken together these two observations give an overall picture
of the syzygies of $M$ which are without free summands.
 A special case is that of the syzygies in the Koszul complex of
a regular sequence.

\end{enumerate}
}
\end{example}

\section{Construction of local modules} \index{construction of
local modules}

For an $\RR$-module $A$, its ring of endomorphisms $\End_\RR(A)$
arises out of the syzygies of $A$ in the following well-known manner.


\begin{Proposition} \label{ausdual2} Let $A$ be a finitely generated
$\RR$-module with a presentation
\[ \RR^m \stackrel{\varphi}{\lar} \RR^n \lar A \rar 0.\]
Then $\Hom_{\RR}(A,A)$ is isomorphic to the kernel of the
induced mapping
\[\Phi_A: A\otimes_{\RR} (\RR^n)^{*} \stackrel{\II\otimes \varphi^{*}}{\lar}
 A\otimes (\RR^m)^{*}.\]
\end{Proposition}
%
%


\subsection*{Auslander dual}
Our main vehicle to build local modules out of other local modules
 is the following notion of
Maurice Auslander (\cite{AusBr}).

\begin{Definition} \label{ausdualdef} {\rm Let $E$ be a finitely generated $\RR$-module with a
projective presentation
\[ F_1 \stackrel{\varphi}{\lar} F_0 \lar E \rar 0.\]
The {\em Auslander dual} of $E$ is the module $D(E)=
\coker(\varphi^{*})$,
\begin{eqnarray} \label{ausdual}
0\rar E^*\lar  F_0^* \stackrel{\varphi^{*}}{\lar} F_1^* \lar D(E) \rar 0.
\end{eqnarray}
}\end{Definition}

The module
$D(E)$ depends on the chosen presentation but it is unique up to
projective summands. In particular the values of the functors
$\Ext_{\RR}^i(D(E),\cdot)$ and $\Tor_i^{\RR}(D(E), \cdot)$, for $i\geq 1$,
are independent of the presentation. Its use here lies in the
following result (see \cite[Chapter 2]{AusBr}):

\begin{Proposition}\label{Adual}
 Let $\RR$ be a Noetherian ring and
 $E$  a finitely generated $\RR$-module. There are two exact
sequences of functors:
\begin{eqnarray} \label{adual1}
\hspace{.4in} 0 \rar \Ext_{\RR}^1(D(E),\cdot) \lar E\otimes_{\RR}\cdot \lar
\Hom_{\RR}(E^*,\cdot) \lar \Ext_{\RR}^2(D(E),\cdot)\rar 0
\end{eqnarray}
\begin{eqnarray}  \label{adual2}
\hspace{.4in}   0 \rar \Tor_2^{\RR}(D(E),\cdot) \lar E^*\otimes_{\RR}\cdot \lar
\Hom_{\RR}(E,\cdot) \lar \Tor_1^{\RR}(D(E),\cdot)\rar 0.
\end{eqnarray}
\end{Proposition}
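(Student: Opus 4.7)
The approach is to split the defining four-term sequence (\ref{ausdual}) for $D(E)$ at the image of $\varphi^{*}$ and then derive both sequences by comparing the resulting long exact sequences of $\Tor$ and $\Ext$ with what one gets from the original presentation of $E$. Setting $L:=\text{image}(\varphi^{*})$, the sequence (\ref{ausdual}) decomposes into the two short exact sequences
\[0 \rar E^{*} \lar F_0^{*} \lar L \rar 0 \quad\text{and}\quad 0 \rar L \lar F_1^{*} \lar D(E) \rar 0.\]

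For (\ref{adual2}), I would tensor both short exact sequences with $M$. Since the $F_i^{*}$ are projective, all higher $\Tor$'s against them vanish, yielding $\Tor^{\RR}_1(L,M)\cong \Tor^{\RR}_2(D(E),M)$ and an injection $\Tor^{\RR}_1(D(E),M)\hookrightarrow L\otimes M$. At the same time, applying $\Hom(\cdot ,M)$ to the presentation $F_1\rar F_0\rar E\rar 0$ and using the isomorphisms $\Hom(F_i,M)\cong F_i^{*}\otimes M$ identifies $\Hom(E,M)$ with the kernel of $\varphi^{*}\otimes 1\colon F_0^{*}\otimes M \rar F_1^{*}\otimes M$. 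A diagram chase against the tensored complex $E^{*}\otimes M\rar F_0^{*}\otimes M\rar F_1^{*}\otimes M$ then shows that the kernel of the natural evaluation $E^{*}\otimes M\rar \Hom(E,M)$ is $\Tor^{\RR}_2(D(E),M)$ while its cokernel is $\Tor^{\RR}_1(D(E),M)$, which is precisely (\ref{adual2}).

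For (\ref{adual1}) I dualize: applying $\Hom(\cdot ,M)$ to the two short exact sequences for $D(E)$, using $\Hom(F_i^{*},M)\cong F_i\otimes M$, the vanishing of $\Ext^{j}_{\RR}(F_i^{*},M)$ for $j\geq 1$, and the dimension shift $\Ext^{2}_{\RR}(D(E),M)\cong \Ext^{1}_{\RR}(L,M)$, yields exact sequences ending in $\Hom(E^{*},M)\rar \Ext^{2}_{\RR}(D(E),M)\rar 0$ and $\Hom(L,M)\rar \Ext^{1}_{\RR}(D(E),M)\rar 0$ respectively. Comparing the first with the right-exact sequence $F_1\otimes M\rar F_0\otimes M\rar E\otimes M\rar 0$, I factor the natural evaluation $E\otimes M\rar \Hom(E^{*},M)$ through $F_0\otimes M$: its image is the kernel of $\Hom(E^{*},M)\rar \Ext^{2}_{\RR}(D(E),M)$, and its kernel is $\Hom(L,M)/\text{image}(F_1\otimes M)\cong \Ext^{1}_{\RR}(D(E),M)$, delivering (\ref{adual1}).

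The only genuine point requiring care is verifying that the connecting maps produced by the various long exact sequences of $\Tor$ and $\Ext$ are compatible with the two canonical evaluation maps $E^{*}\otimes M\rar \Hom(E,M)$ and $E\otimes M\rar \Hom(E^{*},M)$; naturality applied to the presentation of $E$ forces this, after which both assertions reduce to a routine diagram chase. As the author notes, the result is essentially that of Chapter~2 of Auslander--Bridger.
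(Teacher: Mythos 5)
The paper gives no proof of this proposition; it simply cites Chapter~2 of Auslander--Bridger as the source. Your argument is a correct reconstruction of the standard proof found there: splitting the defining four-term sequence for $D(E)$ at $L=\mathrm{image}(\varphi^*)$, using that the $F_i^*$ are finitely generated projective to get $\Hom(F_i^*,M)\cong F_i\otimes M$ and $F_i^*\otimes M\cong \Hom(F_i,M)$, and then reading off the kernel and cokernel of the two evaluation maps from the resulting exact sequences. The key facts you invoke---$\Tor_1(L,M)\cong\Tor_2(D(E),M)$, $\Ext^1(L,M)\cong\Ext^2(D(E),M)$, the identification of $\Hom(E,M)$ as $\ker(\varphi^*\otimes 1)$, and the factorization of the evaluation maps through $F_0^*\otimes M$ resp.\ $F_0\otimes M$---all check out, and the compatibility of the connecting maps with the canonical evaluations is indeed forced by naturality, as you say. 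So the proof is correct and supplies the detail that the paper delegates to the reference.
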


The setup we employ is derived from the analysis of duality of
\cite{AusBr}: There is an exact complex of $\RR$-algebras
\begin{eqnarray} \label{ausbr0}
 E^* \otimes_{\RR} E \lar \Hom_{\RR}(E,E)=\End_{\RR}(E) \lar \Tor_1^{\RR}(D(E),E) \rar 0
\end{eqnarray}
where $D(E)$ is the Auslander dual of $E$. We note that $E^*\otimes_{\RR} E$ maps onto the endomorphisms of $E$
that factor through projective modules.
 
We now turn to 
the understanding of $\Tor_1^{\RR}(D(E),E)$ as a more amenable  ring of  endomorphisms. 
 According to
\cite[Theorem 1.3]{HiltonRees} there is a natural surjective 
homomorphism \[ \Hom_{\RR}(E,E) \mapsto  \Hom_{\RR}(\Ext_{\RR}^1(E,\cdot),\Ext_{\RR}^1(E,\cdot)).\]
Its kernel consists of the maps $f: E \rar E$ that factor through projective modules, a fact  that can be seen
with a brief calculation by considering a presentation $0 \rar M \rar P \rar E \rar 0$, $P$ projective,  and assuming 
for $f: E \rar E$ that $f^*: \Ext_{\RR}^1(E,M) \rar \Ext_{\RR}^1(E,M)$ vanishes. 
The useful point is that 
the  
 endomorphism ring 
 $\Hom_{\RR}(\Ext_{\RR}^1(E,\cdot),\Ext_{\RR}^1(E,\cdot))$ may have a much smaller support than $E$ itself. Furthermore
 if $E$ has no free summands, $E^*\otimes_{\RR} E$ maps into the Jacobson radical of $\Hom_{\RR}(E,E)$ and therefore
$E$ is a local module if 
$\Hom_{\RR}(\Ext_{\RR}^1(E,\cdot),\Ext_{\RR}^1(E,\cdot))$ has a unique maximal two-sided ideal.

\medskip

In each application of this setup we look for which of
 $\Hom_{\RR}(\Ext_{\RR}^1(E,\cdot),\Ext_{\RR}^1(E,\cdot))$ or
$\Tor_1^\RR(D(E),E)$ is more amenable. Let us consider several examples.

\subsection*{Syzygies of perfect modules} These modules, or mild generalizations of them, provide numerous
examples of local modules.

\begin{example} 
{\rm Let $\RR$ be a Cohen--Macaulay local ring of dimension
$d\geq 3$,
and  let $E$
 be a module defined by one relation
\begin{eqnarray}\label{onerelmod}
 0\rar \RR \stackrel{\varphi}{\lar} \RR^n \lar E \rar 0. 
\end{eqnarray}
We assume that the entries of $\varphi$ define an ideal $I$ of height
$\geq 3$, minimally generated by $n$ elements.  This makes $E$ a module that is free in codimension $\leq 2$
and satisfies the condition $S_2$ of Serre. With  these conditions $E$ is then a reflexive module 
that has no free summands.
Let us determine the
number of generators of $\CC=\Hom_{\RR}(E,E)$, and some of its other
properties.
\medskip

In this example, since $E$ has projective dimension $1$,
the functor $\Ext_{\RR}^1(E,X)= \Ext_{\RR}^1(E, \RR)\otimes X= \RR/I\otimes
X$, so that
  \[\Hom_{\RR}(\Ext_{\RR}^1(E,\cdot),\Ext_{\RR}^1(E,\cdot))=\Hom_{\RR}(\RR/I,\RR/I)=\RR/I.\]
Tensoring (\ref{onerelmod}) by $E^*$, we get the complex
\[ 0 \rar E^* \lar E^*\otimes \RR^n \lar E^*\otimes E \rar 0,\]
which is exact by acyclicity lemma.  Since $E^*$ is reflexive and $\RR$ is Cohen-Macaulay, $E^*$ has the
condition $S_2$ of Serre and therefore $E^*\otimes E$ has the condition $S_1$ of Serre and it is free in
codimension $\leq 1$, 
 in particular it
is torsionfree.
Since $E$ has no
free summand,  by Proposition~\ref{jrad1}  $E$ is a
local module. As for the number of generators of $\CC$,
\[\beta_0(I)\beta_1(I)-\beta_0(I)+1 \leq
 \nu(\CC)\leq \nu(E)\nu(E^*)+1= \beta_0(I)\beta_1(I)+1,\]
 where $\beta_i(\cdot)$ denotes Betti numbers.}
\end{example}

\begin{example}
{\rm Let $E$ be a module with a free resolution
\[0 \rar F_n \stackrel{\psi}{\lar} F_{n-1} \lar \cdots \lar F_1
\stackrel{\varphi}{\lar} F_0 \lar E \rar 0.\]
Assume that $\Ext_\RR^i(E,\RR)=0$, $1\leq i\leq n-1$. Dualizing we
have the exact complex
\[0 \rar E^*\lar
 F_0^* \stackrel{\varphi^*}{\lar} F_{1} \lar \cdots \lar F_{n-1}^*
\stackrel{\psi^*}{\lar} F_n^* \lar \Ext_\RR^n(E,\RR) \rar 0.\]

This gives that $D(E)=\coker \varphi^*$ and thus $\Tor^\RR_1(D(E),
E)=\Tor_n^\RR(\Ext_\RR^n(E,\RR),E)$. Note also that
$D(\Ext_\RR^n(E,\RR))= \coker \psi$. Now we apply
Proposition~\ref{ausdual2} to the module $A=\Ext_\RR^n(E,\RR)$ to get
isomorphisms
\[\Hom(\Ext_\RR^n(E,\RR),\Ext_\RR^n(E,\RR))=\ker \Phi_A =
\Tor_n^\RR(\Ext_\RR^n(E,\RR),E).
\]
This gives the exact sequence
\[ 0\rar E^*\otimes E \lar \End(E) \lar \End(\Ext_\RR(E,\RR))  \rar 0,
\] since $\Tor_2^\RR(D(E), E)=\Tor_{n+1}(\Ext_\RR^n(E,\RR), E)=0$.
}
\end{example}


Let us discuss an example that is very rich of details,
 the modules of cycles of a Koszul complex
$\KK(\xx)$ associated to a regular sequence $\xx=\{x_1, \ldots,
x_n\}$, $n\geq 5$:
\[ \KK(\xx): \quad 0 \rar K_n \rar K_{n-1} \rar K_{n-2} \rar \cdots
\rar K_2\rar K_1\rar K_0 \rar 0.\]
For simplicity we take for module $E$ the $1$-cycles $Z_1$ of $\KK$. Note that $Z_1$ is the second module
of syzygies of $\RR/(\xx)$.
There is a pairing in the subalgebra  $\ZZ$ of cycles leading 
\[ Z_1 \times Z_{n-2}\rar Z_{n-1} = \RR,\]
that identifies $Z_{n-2}$ with the dual $E^*$ of $E$.

We are now ready to put this data into the framework of the
Auslander dual. Dualizing the projective presentation of $E$,
\[ 0 \rar K_n \lar \cdots \lar K_{3}\lar K_{2} \lar E=Z_1 \rar 0,\]
gives us the exact  complex
\[0 \rar E^* \lar K_{2}^* \lar K_{3}^* \lar D(E) \rar 0.\]
In other words,  to the identification  $D(E)=Z_{n-4}$:
\[ 0 \rar Z_{n-2} \lar K_{n-2} \lar K_{n-3} \lar Z_{n-4}  \rar 0.\]

Now for the computation of $\Tor_1^{\RR}(D(E),E)$:

\begin{eqnarray*} \Tor_1^{\RR}(D(E),E)&=& \Tor_1^{\RR}(Z_{n-4},E) =
\Tor_2^{\RR}(Z_{n-5}, E) =
\cdots \\
&=& \Tor_{n-3}^{\RR}(Z_0,E)= \Tor_{n-2}^{\RR}(\RR/I,E) = K_{n}\otimes
\RR/I=\RR/I.
\end{eqnarray*}

\begin{Remark}{\rm The number of generators of $\Hom_{\RR}(E,E)$ is bounded
by
\[ \nu(E)\nu(E^*) + 1 .\]
}\end{Remark}

For the purpose of a comparison, let us evaluate $\hdeg (E)$, a quantity that 
controls the number of generators of $E^*$
 (for
which 
refer to \cite{Dalili}). The
multiplicity of $E$ is $(n-1)\deg(\RR)$. Applying $\Hom_{\RR}(\cdot, R)$ to the
projective resolution of $E$, we get
 $\Ext_{\RR}^{n-2}(E,\RR) =\Ext_{\RR}^n(\RR/I,\RR) = \RR/I $ is Cohen-Macaulay,
and its contribution in the formula for $\hdeg (E)$ becomes
\[ \hdeg (E) = \deg (E) + {{d-1}\choose{n-3}}
\hdeg (\Ext_{\RR}^{n-2}(E,\RR))= (n-1) +
{{d-1}\choose{n-3}}
\deg (\RR/I).
\]
It is clear that in appealing to \cite[Theorem 5.2]{Dalili}, to get
information about $\nu(E^{*})$, a similar calculation can be carried
out for any module of cycles of a projective resolution of broad
classes
of Cohen-Macaulay modules.

\medskip

Let $(\RR, \mathfrak{m})$ be a Gorenstein local ring of dimension $d$.
Let $M$ be a perfect $\RR$-module with a minimal free resolution

\[ \FF: \quad 0 \rar F_n \rar F_{n-1} \rar F_{n-2} \rar \cdots
\rar F_2\rar F_1\rar F_0 \rar M \rar 0.\]
We observe that dualizing $\FF$ gives a minimal projective resolution
$\LL$ of $\Ext_{\RR}^{n}(M,\RR)$.
Let $E$ be the module $Z_{k-1}=Z_{k-1}(\FF)$ of $k-1$-cycles of $\FF$,
\[ F_{k+1} \lar F_{k} \lar E \rar 0.\]
Dualizing, to define the Auslander dual $D(E)$, gives the complex
\[ 0 \rar E^{*} \lar L_{n-k} \lar L_{n-k-1} \lar D(E)\rar 0.\]
It identifies $E^*$ with the $(n-k)$--cycles of $\LL$, and $D(E)$ with
its $(n-k-2)$--cycles. In particular this gives $\nu(E)=
\beta_{k}(M)$ and $\nu(E^*)=\beta_{k-1}(M)$.

\medskip

Now
to make use of the Auslander dual setup, we seek some
control over $\Tor_1^{\RR}(D(E),E)$. We make use first of the complex
\[ 0 \rar D(E) \lar L_{n-k-2} \lar \cdots \lar L_0 \lar
\Ext_{\RR}^n(M,\RR)
\rar 0,\]
to get
\[ \Tor_1^{\RR}(D(E),E) \simeq \Tor_{n-k}^{\RR}(\Ext^n(M,\RR),E), \]
and then of the minimal resolution of $E$  to obtain
\[ \Tor_1^{\RR}(D(E),E) \simeq \Tor_{n}^{\RR}(\Ext^n(M,R),M). \]
In particular, $\Tor_1^{\RR}(D(E),E)$ is independent of which module of
syzygies was taken. Furthermore, the calculation  shows that
 $\Tor_2^{\RR}(D(E),E)=0$.

Placing these elements together, we have the exact sequence
\[ 0 \rar E^*\otimes E \lar \Hom_{\RR}(E,E) \lar
\Tor_n^{\RR}(\Ext_{\RR}^n(M,\RR),M)
\rar 0.\] Note that $E^*\otimes E$ is a torsion free $\RR$-module.

Finally, it follows from Proposition~\ref{ausdual2} that
$\Tor_n^{\RR}(\Ext_{\RR}^n(M,\RR),M)$
can be identified to $\Hom_{\RR}(M,M)$.

\medskip

Let us sum up these observations in the following:

\begin{Theorem} \label{syzofperfect} Let $\RR$ be a Gorenstein local ring and  let $M$ be a
perfect module with a minimal free resolution $\FF$
  Let  $E$ be the module of 
$k$-syzygies of $M$, $1<k<n$.
   There exists an exact sequence
\[ 0 \rar E^*\otimes E \lar \Hom_{\RR}(E,E) \lar
\Hom_{\RR}(M,M) \rar 0.\]
\end{Theorem}

This gives the bound
\[ \nu(\Hom_{\RR}(E,E)) \leq \beta_{k}(M)\beta_{k-1}(M)+\nu(\Hom_{\RR}(M,M)).\]
If $M$ is cyclic, or $\dim M = 2 $, the estimation is easy. The
formula also shows up the case of MCM modules to be a corner case for
the general HomAB problem.

\begin{Theorem} \label{syzofperfect2} Let $\RR$ be a Gorenstein local ring and  let $M$ be a
perfect module with a minimal resolution. For the module $E$ of
$k$-syzygies of $M$, $1<k< \mbox{\rm proj dim $M$}$,   there exists an exact sequence
\[ 0 \rar E^*\otimes E \lar \Hom_{\RR}(E,E) \lar
\Hom_{\RR}(M,M) \rar 0.\]
\end{Theorem}

\begin{Corollary}
If $M=\RR/I$, $E$ is a local module.
\end{Corollary}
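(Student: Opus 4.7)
The plan is to combine Theorem~\ref{syzofperfect2} with Proposition~\ref{jrad1}(i). Specializing the theorem to $M=\RR/I$ and noting that $\Hom_\RR(\RR/I,\RR/I)=\RR/I$, the exact sequence reads
\[ 0 \rar E^*\otimes E \lar \CC \lar \RR/I \rar 0, \]
so if $\JJ_1$ denotes the image of $E^*\otimes E$ in $\CC$, then $\CC/\JJ_1\cong\RR/I$ is a commutative local ring.

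The crux is to verify that $E$ has no free summand, so that Proposition~\ref{jrad1}(i) yields $\JJ_1\subseteq \JJ(\CC)$. For $k\geq 2$ this follows from part (2) of the Example on syzygies without free summands in Section~2: taking $s=k-1$, perfection of $M=\RR/I$ over the Gorenstein (hence Cohen--Macaulay) ring $\RR$ gives $\height I = n = \mbox{pd}_\RR M$, so the required $1\leq s\leq n-2$ is exactly the range $2\leq k\leq n-1$ allowed by the hypothesis $k<n$. The boundary case $k=1$, where $E=I$, needs a short direct argument: $I$ is non-principal (since $\mbox{pd}(\RR/I)\geq 2$) and contains a non-zero-divisor (since $\mbox{grade}(I)=n\geq 2$); if one had $I=\RR f\oplus I'$ with $f$ a non-zero-divisor and $I'\neq 0$, then for any $g\in I'$ the product $fg$ would lie in $\RR f\cap I'=0$, forcing $g=0$ by regularity of $f$, a contradiction.

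With $\JJ_1\subseteq \JJ(\CC)$ in hand, the conclusion is formal. Every maximal two-sided ideal $P$ of $\CC$ contains $\JJ(\CC)$, because $\CC/P$ is a simple ring and simple rings have zero Jacobson radical; hence $P\supseteq \JJ_1$. Maximal two-sided ideals of $\CC$ therefore correspond bijectively to maximal two-sided ideals of $\CC/\JJ_1\cong \RR/I$, and the commutative local ring $\RR/I$ has exactly one. Thus $\CC$ has a unique maximal two-sided ideal, and $E$ is a local module.

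I expect the main technical wrinkle to be the $k=1$ case, since the Gorenstein-based Example~(2) of Section~2 requires $s\geq 1$ and does not apply when $E$ is simply the ideal~$I$; once that gap is closed by the ad-hoc argument above, the rest is a clean assembly of Theorem~\ref{syzofperfect2}, Proposition~\ref{jrad1}(i), and the standard containment of $\JJ(\CC)$ inside every maximal two-sided ideal.
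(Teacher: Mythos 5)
The paper leaves this corollary without proof, and your argument supplies precisely the intended one: Theorem~\ref{syzofperfect2} together with the fact that $\Hom_\RR(\RR/I,\RR/I)=\RR/I$ is commutative local, plus Proposition~\ref{jrad1}(i), once $E$ is known to have no free summand. You are also right to flag $k=1$ as a wrinkle the paper's Example~(2) does not cover (that example needs $s\geq 1$, hence $E$ a second syzygy and reflexive), and your direct argument for $E=I$ --- a free summand $\RR f$ would force $I'\subseteq\mathrm{ann}(f)=0$ by regularity of $f$, making $I$ principal and hence $\mathrm{pd}_\RR(\RR/I)\leq 1$, contrary to perfection with projective dimension at least $2$ --- cleanly closes this edge case that the paper glosses over.
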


\begin{Corollary} Suppose $\RR$ is a Gorenstein local ring of dimension
$d$. If $M=\RR/I$ is a Cohen-Macaulay module of projective dimension
$n$ and $E$ is a module of $k$-syzygies of $M$ for $1<k<n$, then
\[ \depth \End(E)\geq d-n+1. \]
\end{Corollary}

\begin{proof} It will be enough to show that $\depth E\otimes E^*\geq
d-n+1$.

Dualizing the presentation
\[ 0\rar E \lar F_{k-1}\lar \cdots \lar F_1\lar F_0 \lar M \rar 0,\]
we obtain a projective resolution of $E^*$
\[ 0  \rar F_{0}^*\lar F_1^*\lar \cdots \lar F_{k-1}^*\lar E^*
\rar 0.\]
Note that $\depth E=d-n+k$ and $\depth E^*=d-k+1$.
Tensoring the second complex by $E$ we get a complex of modules of
depth $d-n+k$. It follows easily that the complex is exact, that thus
$\depth E\otimes E^*\geq (d-n+k)-(k-1)=d-n+1$.

\end{proof}

\subsection*{Some generic modules}\index{generic module}
Let $\varphi$ be a generic $n\times m$ matrix over a field
$k$,
$\varphi=[\xx]=[x_{ij}]$, $1\leq i\leq m, 1\leq j \leq n$, $m\geq n$,
set $\RR=k[\xx]_{(\xx)} $ and consider the
module $E$
\[ 0\rar \RR^n \stackrel{\varphi}{\lar} \RR^m \lar E \rar 0.
\]

We study the ring $\CC=\Hom_\RR(E,E)$. Let $I=I_n(\varphi)$ be the
ideal of maximal minors of $\varphi$. $\RR/I$ is a normal
Cohen--Macaulay domain and many of its properties
are deduced from the Eagon--Northcott complexes (\cite{EN}). We are
going to use them to obtain corresponding properties of $E$:

\begin{itemize}
\item If $n<m\geq 3$ (which we assume throughout), $E$ is a reflexive module.
\item By dualizing we obtain the
Auslander dual $D(E)$
 of $E$:
\[ 0\rar E^* \lar \RR^m \stackrel{\varphi^*}{\lar} \RR^n \lar
D(E)=\Ext_\RR^1(E, \RR)\rar 0.
\]

\item $\Ext_\RR^1(E,\RR)$ is isomorphic to an ideal of $\RR/I$:
$\Ext_{\RR}^1(E,\RR)$ is a perfect $\RR$--module, annihilated $I$.
These assertions follow directly from the perfection of the complexes.
Finally, localizing at $I$ the complex is quasi-isomorphic to Koszul
complex of a regular sequence which yield
\[ \Ext_\RR^1(E,\RR)_I \simeq (\RR/I)_I.\]

\end{itemize}

\begin{Theorem} $\CC=\Hom_{\RR}(E,E)$ is a local ring generated by
$\nu(E^*)\nu(E)+1=m\times {m\choose n+1}+1$ elements.
\end{Theorem}

\begin{proof}
As in the discussion of example above,
The functor $\Ext_{\RR}^1(E,X)= \Ext_{\RR}^1(E, \RR)\otimes X= \RR/I\otimes
X$, so that
  \[\Hom_{\RR}(\Ext_{\RR}^1(E,\cdot),\Ext_{\RR}^1(E,\cdot))
=\Hom_{\RR/I}(\Ext_\RR^1(E,\RR),\Ext_\RR^1(E,\RR))
=\RR/I,\]
as $\Ext_\RR^1(E,\RR)$ is isomorphic to an ideal of the normal domain
$\RR/I$.
Furthermore, since $E$ is reflexive and $E^*$ is a module of syzygies
in the Eagon--Northcott complex it has no free summand according to a
previous observation, $E$
has the same property.
The final assertion comes from the complex again.

\end{proof}

\section{Homological properties of local modules}


Let $(\RR,\m)$ be a Noetherian local ring and $E$
a finitely generated $\RR$-module. Set $\CC=\End(E)$ and  consider
the natural  action of $\CC$  making $E$ a left  $\CC$--module. In this section $\mbox{\rm mod}(\CC)$ denotes the category 
of left finitely generated $\CC$-modules.

\medskip
Let us begin with an observation.

\begin{Proposition} \label{projgen}
Let $E$ be a local module and $\CC$ its ring of endomorphisms. 
If $M$ is a nonzero  projective $\CC$-module, then $M$ is
a generator of $\mbox{\rm mod}(\CC)$.
\end{Proposition}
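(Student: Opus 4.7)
The plan is to establish the proposition via the trace-ideal criterion for generators. Recall that a left $\CC$-module $M$ is a generator of $\mathrm{Mod}(\CC)$ if and only if the trace ideal
\[
  \tau_{\CC}(M) := \sum_{\varphi \in \Hom_{\CC}(M,\CC)} \varphi(M)
\]
equals $\CC$; this is always a two-sided ideal of $\CC$, so the entire argument reduces to showing it is not proper.

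Since $E$ is a local module, $\CC$ has a unique maximal two-sided ideal $\JJ$, and every proper two-sided ideal of $\CC$ is contained in $\JJ$. Hence for $\tau_{\CC}(M)$ there is a dichotomy: either $\tau_{\CC}(M) = \CC$, in which case $M$ is a generator and we are done, or $\tau_{\CC}(M) \subseteq \JJ$, which is the case I need to rule out.

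Assume first that $M$ is finitely generated, which is the case of interest for the subsequent theorem in which $M = E$ is finitely generated over $\RR$ and hence over $\CC$. Projectivity realizes $M$ as a direct summand of some $\CC^n$, so there exists an idempotent $e \in \End_{\CC}(\CC^n)$, represented as an $n\times n$ matrix over $\CC$, whose image is $M$. The matrix entries of $e$ lie in $\tau_{\CC}(M)$, being the values of coordinate projections applied to $e$ of the standard basis vectors. If $\tau_{\CC}(M) \subseteq \JJ$, then every entry of $e$ lies in $\JJ$, i.e., $e \in M_n(\JJ)$; since the Jacobson radical of the matrix ring $M_n(\CC)$ is $M_n(\JJ)$, the element $1 - e$ is a unit in $M_n(\CC)$. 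Combined with the idempotency relation $e(1-e) = 0$, this forces $e = 0$ and hence $M = 0$, contradicting nonzeroness.

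The main obstacle I expect is extending the argument to projective $M$ that are not finitely generated as $\CC$-modules. The dual basis lemma still furnishes, for each nonzero $m \in M$, a finite expression $m = \sum_{i=1}^{s} f_i(m) m_i$ with $f_i(m) \in \tau_{\CC}(M)$, so the assumption $\tau_{\CC}(M) \subseteq \JJ$ would give $M = \JJ M$. From there I would either invoke a Nakayama-type statement for projective modules in the presence of the Jacobson radical, or descend to a finitely generated projective summand of $M$ built from a truncated dual basis containing a chosen $m$, and re-apply the matrix argument above.
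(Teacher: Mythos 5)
Your proof is correct, but it takes a genuinely different route from the paper's. The paper works through the semisimple quotient: since $\CC/\JJ$ is simple Artinian (hence has a unique simple module $L$), both $M/\JJ M$ and $F/\JJ F$ are sums of copies of $L$, so there is a surjection $M^r \twoheadrightarrow F/\JJ F$, which lifts through the projection $F \to F/\JJ F$ because $M$ is projective, and Nakayama finishes. You instead use the trace-ideal criterion for generators and exploit the uniqueness of the maximal two-sided ideal directly: since $\tau_\CC(M)$ is two-sided, it is either all of $\CC$ (done) or is contained in $\JJ$; in the latter case the idempotent $e \in M_n(\CC)$ cutting out $M$ from $\CC^n$ has all entries in $\tau_\CC(M) \subseteq \JJ$, so $e \in M_n(\JJ) = J(M_n(\CC))$, whence $1-e$ is a unit and $e(1-e)=0$ forces $e=0$. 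Your approach is more ring-theoretic and makes the role of the unique maximal two-sided ideal explicit via a clean dichotomy, while the paper's is more representation-theoretic and generalizes painlessly once you know $\CC/\JJ$ is simple Artinian. Both approaches share the same residual obstruction in the non-finitely-generated case: the paper implicitly needs $M/\JJ M \neq 0$, and you need $M = \JJ M \Rightarrow M = 0$; both are exactly Bass's theorem that a projective module equal to its radical submodule is zero. You flag this honestly, and since the application in Theorem~\ref{homollocal} takes $M=E$ finitely generated over $\RR$ (hence over $\CC$), your finite-rank matrix argument already covers the case that matters.
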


\begin{proof}
 Let $\JJ$ be the Jacobson
radical of $\CC$. Up to isomorphism there is only one indecomposable
$\CC/\JJ$-module, which we denote by $L$.

Let $F$ be a finitely generated left $\CC$--module. Then $M/\JJ M$
and $F/\JJ F$ are isomorphic to direct sums $L^n$ and $L^m$ of $L$.
It follows that for some integer $r>0$ there is a surjection
\[ \varphi: M^r \rar F/\JJ F. \]
Since $M$ is projective, $\varphi$ can lifted to a mapping
$\Phi: M^r\rar F$ such that $F = \Phi(M^r) + \JJ F$. By Nakayama
Lemma $\Phi$ is surjective, as desired.

\end{proof}

\begin{Theorem} \label{homollocal} Let $(\RR, \m)$ a Noetherian local ring 
and $E$ a local
module. If $\RR$ is normal domain
 and  $E$ is $\CC$--projective and
it is $\RR$-torsionfree whose rank is invertible in $\RR$ 
   then $E$ is $\RR$-free.
\end{Theorem}

\begin{proof}

According to Corollary~\ref{Jacsemi},
 $E$ has no free summand.

By Proposition~\ref{projgen}, $E$ is a projective generator and in
particular we have an isomorphism of left $\CC$-modules
\begin{eqnarray}\label{stablegen}
 E^s \simeq \CC\oplus H.
\end{eqnarray}

\medskip

To prove the assertion, we recall how the {\em trace} of the elements of $\CC$
may be defined. Let $\SS$ be the field of fractions of $\RR$.
Consider that canonical ring homomorphism
\[ \varphi: \End_\RR(E)\lar \End_\SS(\SS\otimes E).
 \] For $\ff\in \End_\RR(E)$, define
\[ \mathrm{tr}(\ff)= \mathrm{tr}_\SS(\varphi(\ff)),\]
where $\mathrm{tr}_\SS$ is the usual trace of a matrix
representation. It is independent of the chosen $\SS$-basis of
$\SS\otimes E$.

To show that $\mathrm{tr}(\ff)\in \RR$ uses a standard argument.
 For each prime ideal
$\p$ of $\RR$ of height $1$, $E_\p$ is a free $\RR_\p$-module of rank
$r=\mbox{\rm rank }(E)$,  and picking one of its basis gives also a $\SS$-basis for
$\SS\otimes E$ while showing that $\mathrm{tr}(\ff)\in \RR_\p$. Since
$\RR$ is normal, $\bigcap_\p \RR_\p=\RR$, and thus $\mathrm{tr}(\ff)\in
\RR$.

Note that this defines an element of $\Hom_\RR(\CC, \RR)$.
Now looking at the isomorphism (\ref{stablegen}), yields that the
trace of $\CC$ as an $\RR$-module is contained in the trace
ideal of $E$, which is a proper ideal of $\RR$ since $E$ has no $\RR$-free summand.
But this is impossible since $\mathrm{tr}(\II)=r$, which is a unit of
$\RR$ by assumption.

\end{proof}

\begin{Corollary}\label{mainlocalcor}
Let $\RR$ be a Noetherian local ring and  $E$  a local  $\RR$-module such that $\CC$ has finite global dimension.
\begin{enumerate}
\item \mbox{\rm (\cite[Theorem 2.5]{BH84}, \cite[Theorem 3.1]{RegAlg})}
  $\CC$ is a maximal Cohen-Macaulay
$\RR$-module. 
\item  If $E$  is a maximal Cohen-Macaulay $\RR$--module then it is a projective $\CC$--module.
\item Moreover, if
$\RR$ is a normal domain  and
  the rank $\mathrm{r}$ of $E$ is invertible in
 $\RR$  then $\RR$ is a regular local ring.
\end{enumerate}
\end{Corollary}

\begin{proof}
Since $\CC$ has a unique two-sided maximal ideal,
    by  \cite[Theorem
2.5]{BH84} or \cite[Theorem 3.1]{RegAlg}, $\CC$
 is a Cohen-Macaulay $\RR$-module. (For similar reasons, the left and the right global dimensions of $\CC$, and
 similarly  their
 finitistic versions, coincide.)
On the other hand, 
 if $\xx=x_1, \ldots, x_n$ is a maximal
$\RR$-sequence for $E$ and $\CC$, $n= \mbox{\rm gl dim }\CC$,
 $E/(\xx)E$ is a $\CC$-module with nonzero socle. It follows from
Auslander-Buchsbaum formula (\cite[Theorem 1.3.3]{BH}
(still valid for these algebras by \cite{BH84}, \cite{RegAlg})
that is
\[ \mbox{\rm proj dim}_\CC E/(\xx)E=
 \mbox{\rm proj dim}_\CC E + n =\mbox{\rm gl dim }\CC=n.\]
Thus by Theorem~\ref{homollocal} $E$ is a free $\RR$-module. This
means that
$\CC$ is a matrix ring over $\RR$ of finite global dimension
 and therefore $\RR$ is a regular
local ring.
\end{proof}

\begin{Conjecture} Let $E$ be a local $\RR$--module. If $E$ is $\CC$--cyclic of finite projective dimension over $\CC$, then $E$ is $\CC$--projective.
\end{Conjecture}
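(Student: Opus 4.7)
The plan is to reduce to the case where $\RR$ is $\m$-adically complete so that $\CC$ is a semiperfect Noetherian $\RR$-algebra (and Krull--Schmidt holds for finitely generated $\CC$-modules). Since $\CC$ has a unique two-sided maximal ideal $\JJ$ with $\CC/\JJ \cong M_s(D)$ for a division algebra $D$ over $k = \RR/\m$, we may write $\CC \cong P^s$ where $P$ is the unique indecomposable finitely generated projective $\CC$-module, and every finitely generated projective $\CC$-module is isomorphic to $P^a$ for a unique $a \geq 0$. I would take the minimal projective $\CC$-resolution
\[ 0 \to P^{a_n} \to P^{a_{n-1}} \to \cdots \to P^{a_0} \to E \to 0. \]
The cyclic hypothesis forces $E/\JJ E$ to be a cyclic $M_s(D)$-module, hence of the form $(D^s)^t$ with $1 \leq t \leq s$, so $a_0 = t$, and the target is to prove $n = 0$.

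The next step is to extract a numerical constraint generically. After reducing to $\RR$ a domain and $E$ torsion-free, tensoring the resolution with $\SS = \mathrm{Frac}(\RR)$ yields a resolution of $E_\SS$ as a $\CC_\SS = M_r(\SS)$-module, where $r = \mathrm{rank}_\RR E$ and $E_\SS \cong \SS^r$ is the unique simple $\CC_\SS$-module. Since $\CC_\SS$ is semisimple, $P_\SS \cong E_\SS^k$ for some $k \geq 1$, and the identity $\CC \cong P^s$ forces $ks = r$. Counting multiplicities of $E_\SS$ in the exact tensored resolution gives the Euler-characteristic relation $k \sum_i (-1)^i a_i = 1$, so $k$ divides $1$ and thus $k = 1$. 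Consequently $s = r$, and $P$ is generically isomorphic to $E$; in particular, if $E$ were already known to be projective, the fact that $E_\SS$ is simple would force $t = 1$ and $E \cong P$.

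The final step is to lift the generic isomorphism to an honest isomorphism $E \cong P$. Since $\Hom_\CC(P,E)$ has positive $\RR$-rank, one picks a nonzero $\psi\colon P \to E$; as $P$ is $\RR$-torsion-free and $\psi_\SS$ is a nonzero map between simple $\CC_\SS$-modules, $\psi$ is injective. The cokernel $C = E/\psi(P)$ is $\RR$-torsion and has finite $\CC$-projective dimension, by the horseshoe lemma applied to $0 \to P \to E \to C \to 0$, so it suffices to show $C = 0$. My plan would be to extend the trace argument of Theorem~\ref{homollocal} to finite projective resolutions via a Hattori--Stallings-style Euler characteristic trace $\chi(\II; C) = \sum_i (-1)^i \mathrm{tr}(\II \mid P_i)$ computed on a finite $\CC$-projective resolution of $C$: since $C_\SS = 0$ one expects $\chi(\II; C) = 0$, while an internal calculation term-by-term should give a controlled obstruction that forces the resolution to be empty. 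The main obstacle is precisely this last step: the hypotheses of the conjecture do not include the invertibility of $r$ or normality of $\RR$ that made the trace argument work cleanly in Theorem~\ref{homollocal}. Resolving this likely requires either imposing such auxiliary assumptions or using the cyclic hypothesis inductively, reducing projective dimension $n$ to $n-1$ by analyzing the first syzygy $K_0 = \ker(P_0 \to E)$ and exploiting the matrix structure $\CC/\JJ \cong M_r(D)$ that the local hypothesis provides.
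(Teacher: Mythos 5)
The statement you are trying to prove is stated in the paper as a \emph{Conjecture}, with no proof offered; there is nothing to compare your attempt against, so your argument has to stand on its own.

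Your structural setup is reasonable and is the natural one: pass to the completion so that $\CC$ is semiperfect and Krull--Schmidt holds, use $\CC/\JJ\cong M_s(D)$ to write $\CC\cong P^s$ for a single indecomposable projective $P$, take a minimal projective resolution of $E$, and try to force the resolution to have length zero. The Euler-characteristic count at the generic point, pinning down $k=1$ and $s=r$, is a genuine observation. But the argument as written has two gaps, one of which you acknowledge and one of which you do not.

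First, the step ``After reducing to $\RR$ a domain and $E$ torsion-free'' is not available. The conjecture is stated for an arbitrary Noetherian local $\RR$ and an arbitrary local module $E$; nothing in the hypotheses lets you pass to a domain or assume $E$ torsion-free, and without that the localization at $\SS=\mathrm{Frac}(\RR)$, the identification $\CC_\SS\cong M_r(\SS)$, and all the rank bookkeeping simply do not make sense. Note that even Theorem~\ref{homollocal} lists torsion-freeness (and normality, and invertibility of the rank) as explicit hypotheses, not as consequences of $E$ being local. You would need either to restrict the conjecture accordingly, or to replace the generic-point computation with something local --- e.g., a multiplicity count over $\CC/\JJ$ --- and it is not clear such a substitute yields the same constraint.

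Second, as you yourself flag, the final step of promoting the generic isomorphism $P_\SS\cong E_\SS$ to an actual isomorphism $P\cong E$ is unfinished. The Hattori--Stallings trace idea is plausible, but it is precisely the point at which Theorem~\ref{homollocal} invokes normality of $\RR$ and invertibility of $\mathrm{rank}(E)$; dropping those hypotheses is the whole content of the conjecture. Your proposed fallback, inducting on projective dimension by looking at the first syzygy $K_0=\ker(P_0\to E)$, does not directly close the gap either: $K_0$ is in general not $\CC$-cyclic, so the statement of the conjecture does not apply to it, and you would have to formulate and prove a strictly stronger inductive statement. In short, the proposal sets up a sensible framework and extracts a useful numerical constraint, but it does not prove the conjecture, and the missing ingredients are exactly the ones the paper signals as the hard part by leaving the statement open.
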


{\bf Acknowedgments: } The author was partially supported by the NSF. He is 
also grateful to the referee 
for the numerous suggestions that improved  the paper.

\end{document}